\def\ps@pprintTitle{%
 \let\@oddhead\@empty
 \let\@evenhead\@empty
 \def\@oddfoot{}%
 \let\@evenfoot\@oddfoot}
\DeclarePairedDelimiter\floor{\lfloor}{\rfloor}
\newtheorem{theorem}{Theorem}[section]
\newtheorem{lemma}[theorem]{Lemma}
\newtheorem{proposition}[theorem]{Proposition}
\newtheorem{conjecture}[theorem]{Conjecture}
\newtheorem{definition}[theorem]{Definition}
\newtheorem{corollary}[theorem]{Corollary}
\newenvironment{proof}{\paragraph{Proof}}{\hfill$\square$}
\newcommand{\BigO}[1]{\ensuremath{\operatorname{O}\left(#1\right)}}
\renewcommand{\geq}{\geqslant}
\renewcommand{\leq}{\leqslant}
\newcommand{\N}{\mathbb{N}}
\newcommand{\Z}{\mathbb{Z}}
\newcommand{\Q}{\mathbb{Q}}
\newcommand{\R}{\mathbb{R}}
\newcommand{\C}{\mathbb{C}}
\DeclareMathOperator{\lcm}{lcm}
\newcommand{\Mod}[1]{\ (\mathrm{mod}\ #1)}
\begin{document}

\begin{frontmatter}

\title{On the density of visible lattice points along polynomials}

\author[mymainaddress]{Sneha Chaubey}
\ead{sneha@iiitd.ac.in}

\author[mymainaddress]{Ashish Kumar Pandey}
\ead{ashish.pandey@iiitd.ac.in}

\address[mymainaddress]{Department of Mathematics, IIIT Delhi, New Delhi 110020}

\begin{abstract}
Recently, the notion of visibility from the origin has been generalized by viewing lattice points through curved lines of sights, where the family of curves considered are $y=mx^k$, $k\in\mathbb{N}$. In this note, we generalize the notion of visible lattice points for a given polynomial family of curves passing through the origin, and study the density of visible lattice points for this family. The density of visible lattice points for family of curves $y=mx^k$, $k\in\mathbb{N}$ is well understood as one has nice arithmetic interpretations in terms of a generalized gcd function, which seems to be absent for general polynomial families. 
 We pose ``Visibility density conjecture" regarding the density of visible lattice points for polynomial families passing through the origin, and show some numerical results supporting the conjecture. We obtain a lower bound on the density for a class of quadratic families. In addition, we discuss some ideas of the proof of the conjecture for the simplest quadratic family.
\end{abstract}

\begin{keyword}
visible lattice points, polynomial congruences, density
\MSC[2020] 11P21\sep 11M99 \sep 11H06
\end{keyword}

\end{frontmatter}


\section{Introduction and Main Results}\label{sec:intro}
A point $(a,b)$ in two-dimensional integer lattice, $\Z^2$, is said to be {\em visible} from the origin if there exists no other point of $\Z^2$ on the line segment joining the origin and the point $(a,b)$. The set of visible lattice points are in one-to-one correspondence with lines passing through the origin of rational slope (including infinite slope). If we restrict our attention only to points in $\N^2$, then visible lattice points in $\N^2$ are in one to one correspondence with family of lines 
\begin{equation}\label{fam:line}
\{y=q x \ | \ q\in \Q^+\}.
                        \end{equation}
We replace lines in \eqref{fam:line} by polynomials passing through the origin and define a new family as follows.

\begin{definition}\label{def:polyfam}
For a fixed vector $(a_n, a_{n-1},\dots,a_1)\in \Z^{n}$ with $a_n\neq 0$, $a_i\ge 0$ for all $1\le i\le n$, and $\gcd(a_n,a_{n-1},\dots,a_1)=1$, let
\[
\mathcal{F}(a_n, a_{n-1},\dots,a_1) :=  \{y=q(a_n x^n+a_{n-1} x^{n-1}+\cdots +a_1 x) \ | \ q\in \Q^+\}.
\]
\end{definition}

We generalize the notion of visibility for points in $\N^2$ to the family $\mathcal{F}$ in Definition~\ref{def:polyfam} as follows.

\begin{definition}\label{def:vis}
Consider the family $\mathcal{F}(a_n, a_{n-1},\dots,a_1)$ as in  Definition~\ref{def:polyfam}. A point $(a,b)$ in $\N^2$ is said to be $\mathcal{F}(a_n, a_{n-1},\dots,a_1)$-visible if there exists a $q\in \Q^+$ such that
$
b=q(a_n a^n+a_{n-1} a^{n-1}+\cdots +a_1 a)
$
and there is no other point of $\N^2$ on the curve
\[
y=q(a_n x^n+a_{n-1} x^{n-1}+\cdots +a_1 x)
\]
between the origin and $(a,b)$. We denote the set of all $\mathcal{F}(a_n, a_{n-1},\dots,a_1)$-visible points in $\N^2$ by $V(a_n, a_{n-1},\dots,a_1)$.
\end{definition}

We work with the following definition of density of a subset $S$ of $\N^2$.

\begin{definition}\label{def:density}
The density of a set $S\subseteq\N^2$ is
\[
\operatorname{dens(S)}=\lim_{N\to \infty} \frac{|S\cap R_N|}{N^2}
\]
provided the limit exists, where $N\in \N$, $|\cdot|$ denotes the cardinality of a set, and 
\begin{equation}\label{eq:RN}
R_N=\{(a,b)\in \Z^2 \ | \ 1\le a,b\le N\}. 
\end{equation}  
\end{definition} 

The set $V(1)$ of $\mathcal{F}(1)$-visible points is the usual set of visible lattice points in $\N^2$ and has been studied extensively from various perspectives. The density of $V(1)$ is $1/\zeta(2)=6/\pi^2$, where $\zeta(s)$ is the classical Riemann-zeta function \cite{Apostol1976}. The problems concerning counting and distribution of visible lattice points in suitable planar and smooth convex domains have been investigated by various authors (see \cite{Baker2010PrimitiveDomains,Boca2000DistributionOrigin,Huxley1996PrimitiveBy} and references therein). The topological and spectral properties of visible lattice points have been studied in \cite{Baake2015, Baake2000}. The set $V(1)$ is an example of a  weak model set, which has positive topological entropy but pure point dynamical and diffraction spectrum \cite{Baake2015, Baake2017OnDensity}. It has also been explored from a probabilistic point of view where the authors compute the asymptotic distribution of visible lattice points visited by a certain random walker (see \cite{Cilleruelo2019} and references therein). 

The density of $V(1,0,\dots,0)$ is known to be $1/\zeta(n+1)$ \cite{Goins2018}. The proof uses the following characterization of $V(1,0,\dots,0)$
\[
V(1,0,\dots,0)=\{(a,b)\in \N^2 \;|\; {\gcd}_n(a,b)=1 \}
\]
where ${\gcd}_n$ is a generalized $\gcd$ function defined by
\[
{\gcd}_n(a,b)=\max\{\ell\in \N \;|\;\ell\mid a \text{ and } \ell^n\mid b\}.
\]
In particular, for $n=1$, $V(1)$ is characterized by the usual $\gcd$ function. This arithmetic characterization which enables one to answer various distribution questions for sets $V(1, 0, \dots, 0)$ is unavailable for sets $V(a_n,a_{n-1},\dots,a_1)$. This makes the problem of determining the density non-trivial in this case. Nevertheless, this motivates us to look at the set
\[
S(a_n,a_{n-1},\dots,a_1)=\{(a,b)\in \N^2 \;|\; \gcd(a_n a^n+a_{n-1} a^{n-1}+\cdots +a_1 a, b)=1\},
\]
and its association with $V(a_n,a_{n-1},\dots,a_1)$. To this effect, we have the following result.

\begin{theorem}\label{thm:sub}
For a fixed vector $(a_n, a_{n-1},\dots,a_1)\in \Z^{n}$ with $a_n\neq 0$, $a_i\ge 0$ for all $1\le i\le n$, and $\gcd(a_n,a_{n-1},\dots,a_1)=1,$
\[
S(a_n,a_{n-1},\dots,a_1)\subseteq V(a_n,a_{n-1},\dots,a_1)\cap S(1).
\]
Moreover,
\[
\operatorname{dens}(S(a_n,a_{n-1},\dots,a_1)=\prod_{p \text{ prime}}\Big(1-\frac{f_{a_n,a_{n-1},\dots,a_1}(p)}{p^2}\Big),
\]
where
\begin{align}
    &f_{a_n,a_{n-1},\dots,a_1}(\ell):=|\{1\le d\le \ell \ | \ a_n d^n+a_{n-1}d^{n-1}+\cdots+a_1d\equiv 0 \Mod{\ell}\}|. \label{fd}
\end{align} 
\end{theorem}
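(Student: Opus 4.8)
Throughout write $P(x) := a_n x^n + a_{n-1}x^{n-1} + \cdots + a_1 x$ and abbreviate $f := f_{a_n,a_{n-1},\dots,a_1}$. I would first record two elementary features of $P$: because every coefficient is nonnegative with $a_n \ge 1$, the map $P$ is strictly increasing on $(0,\infty)$ and satisfies $P(t) \ge 1$ for every integer $t\ge 1$; and because $x \mid P(x)$, we have $a \mid P(a)$ for every $a$. For the inclusion, let $(a,b) \in S(a_n,\dots,a_1)$, so that $\gcd(P(a),b)=1$, and put $q := b/P(a) \in \Q^+$. Then $b = q\,P(a)$, so $(a,b)$ lies on the curve $y = qP(x)$. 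If some $(c,d) \in \N^2$ with $0 < c < a$ lay on this curve, then $d\,P(a) = b\,P(c)$, so $P(a) \mid b\,P(c)$; as $\gcd(P(a),b)=1$ this forces $P(a)\mid P(c)$, which is impossible since $0 < P(c) < P(a)$ by monotonicity. Hence $(a,b)$ is $\mathcal F(a_n,\dots,a_1)$-visible. Moreover, any common divisor of $a$ and $b$ divides $P(a)$ (as $a\mid P(a)$) and $b$, hence divides $\gcd(P(a),b)=1$, so $\gcd(a,b)=1$ and $(a,b)\in S(1)$. This gives $S(a_n,\dots,a_1)\subseteq V(a_n,\dots,a_1)\cap S(1)$.

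For the density, writing $A_N := |S(a_n,\dots,a_1)\cap R_N|$ and applying M\"obius inversion to the coprimality condition,
\begin{equation*}
A_N = \sum_{a=1}^N\sum_{b=1}^N\ \sum_{e\mid\gcd(P(a),b)}\mu(e) = \sum_{e=1}^N \mu(e)\,N_P(e)\left\lfloor\frac{N}{e}\right\rfloor,
\end{equation*}
where $N_P(e):=|\{1\le a\le N: e\mid P(a)\}|$ and the outer sum terminates at $N$ because $e\mid b\le N$. The function $f$ of \eqref{fd} counts the solutions of $P(d)\equiv 0\Mod{e}$ in one period, so periodicity gives $N_P(e)=f(e)N/e+\BigO{f(e)}$ and $\lfloor N/e\rfloor=N/e+\BigO{1}$. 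Substituting,
\begin{equation*}
A_N = N^2\sum_{e=1}^N\frac{\mu(e)f(e)}{e^2} + \BigO{N\sum_{e\le N}\frac{f(e)}{e}} + \BigO{\sum_{e\le N}f(e)}.
\end{equation*}

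The key arithmetic input is that $f$ is multiplicative, by the Chinese Remainder Theorem, and that $f(p)\le n$ for every prime, since $P\bmod p$ is a nonzero polynomial of degree at most $n$ over the field $\Z/p\Z$ (it cannot vanish identically, else $p\mid\gcd(a_n,\dots,a_1)=1$). Hence $\mu(e)f(e)/e^2$ is multiplicative and absolutely summable (dominated by $\prod_p(1+n/p^2)$), and its Euler product collapses, as $\mu(p^k)=0$ for $k\ge2$, to
\begin{equation*}
\sum_{e=1}^\infty\frac{\mu(e)f(e)}{e^2} = \prod_{p\text{ prime}}\Big(1-\frac{f(p)}{p^2}\Big).
\end{equation*}
As a sanity check, for $n=1$ this is $\prod_p(1-1/p^2)=1/\zeta(2)$, recovering the classical density of $V(1)$.

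It remains to show the two error terms are $o(N^2)$, and this is where I expect the genuine work to lie. The obstacle is that $f(p^k)$ need not be bounded: already for $P(x)=x^2$ one has $f(p^k)=p^{\lfloor k/2\rfloor}$, reflecting a repeated root at the origin. The plan is to prove a bound of the shape $f(p^k)\ll_n p^{k(1-1/n)}$ (for separable factors the roots lift uniquely by Hensel's lemma and the fiber stays bounded, so all growth comes from multiple roots and is controlled by the degree, the extreme case being $P=x^n$), which via multiplicativity yields $f(e)\ll_n e^{1-1/n+o(1)}$, and therefore $\sum_{e\le N}f(e)/e\ll_n N^{1-1/n+o(1)}$ together with $\sum_{e\le N}f(e)\ll_n N^{2-1/n+o(1)}$. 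Both error terms are then $o(N^2)$, so dividing by $N^2$ and letting $N\to\infty$ (the tail $\sum_{e>N}\mu(e)f(e)/e^2$ vanishes by absolute convergence) gives $\operatorname{dens}(S(a_n,\dots,a_1))=\prod_p(1-f(p)/p^2)$, as claimed.
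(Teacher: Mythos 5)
Your inclusion argument and your extraction of the main term coincide with the paper's proof: the same choice of $q=b/P(a)$, the same divisibility contradiction from strict monotonicity of $P$ on $(0,\infty)$, the same M\"obius inversion, and the same Euler-product collapse using multiplicativity of $f$ and the Lagrange bound $f(p)\le n$. The divergence is in the error terms, and there your argument has a genuine gap as written: the estimate $f(p^k)\ll_n p^{k(1-1/n)}$ is announced as a plan, and the Hensel heuristic you offer does not by itself cover the primes dividing the discriminants of the irreducible factors, nor the interaction between repeated factors; turning it into a proof is a real (if classical) piece of work of the S\'andor--Stewart type, and the implied constant must in any case depend on $P$, not only on $n$. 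The paper sidesteps any pointwise bound: its Lemma on $f_P$ shows that $\sum_\ell f(\ell)\ell^{-s}$ equals, up to harmless Euler factors, the product of the Dedekind zeta functions of the fields $\Q[x]/(m_i(x))$, hence has a pole of order $J$ at $s=1$, and a Tauberian theorem then gives $\sum_{t\le x}f(t)\sim Cx(\log x)^{J-1}$, which is fed into Abel summation to make the secondary sums $o(N^2)$.

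That said, your route can be repaired far more cheaply than you propose, and the repair makes it \emph{more} elementary than the paper's. Since $\mu(e)=0$ unless $e$ is squarefree, every one of your three sums may be restricted to squarefree moduli, where multiplicativity together with $f(p)\le n$ already gives $f(e)\le n^{\omega(e)}=\BigO{e^{\varepsilon}}$ for every $\varepsilon>0$ (here $\omega(e)$ is the number of distinct prime divisors of $e$). Both error terms are then $\BigO{N^{1+\varepsilon}}=o(N^2)$, and no information about $f$ at prime powers is needed; the absolute convergence of $\sum_e\mu(e)f(e)e^{-2}$ follows from the same bound. With that one observation inserted in place of the unproved prime-power estimate, your proof is complete; the paper needs the prime-power behaviour of $f$ only because it chooses to estimate the unrestricted mean value $\sum_{t\le x}f(t)$.
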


Theorem~\ref{thm:sub} provides a lower bound on $\operatorname{dens}(V(a_n,a_{n-1},\dots,a_1))$. The function in \eqref{fd} can be explicitly calculated for the special case $(a_n,a_{n-1},\dots,a_1)=(1,k)$, $k\ge 2$ and we obtain the following corollary.

\begin{corollary}\label{cor:densS1k}
\[
\operatorname{dens}(V(1,k))\ge\prod\limits_{p \text{ prime}} \Big(1-\frac{2}{p^2}\Big)\prod_{\substack{p|k\\ p \text{ prime}}}\Big(1+\frac{1}{p^2-2}\Big), \quad k\ge 2.
\]
\end{corollary}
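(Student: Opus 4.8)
The plan is to read off the density of $S(1,k)$ directly from Theorem~\ref{thm:sub} and then use the inclusion $S(1,k)\subseteq V(1,k)$ supplied there to turn it into the claimed lower bound. The pair $(a_2,a_1)=(1,k)$ corresponds to the polynomial $x^2+kx=x(x+k)$, so Theorem~\ref{thm:sub} gives
\[
\operatorname{dens}(S(1,k))=\prod_{p\text{ prime}}\Big(1-\frac{f_{1,k}(p)}{p^2}\Big).
\]
Since the Euler product only involves $f_{1,k}$ evaluated at primes (never at prime powers), the entire task reduces to counting, for each prime $p$, the residues $d$ modulo $p$ satisfying $d(d+k)\equiv 0\Mod p$.

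The key step is this root count. Because $p$ is prime, $d(d+k)\equiv 0\Mod p$ forces $d\equiv 0$ or $d\equiv -k\Mod p$. These two roots are distinct precisely when $p\nmid k$ and collapse to the single root $d\equiv 0$ when $p\mid k$, so
\[
f_{1,k}(p)=\begin{cases}2, & p\nmid k,\\[2pt] 1, & p\mid k.\end{cases}
\]
This is the only place where the arithmetic of $k$ enters; it records the ramification of $x(x+k)$ at the primes dividing $k$.

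It then remains to rearrange the product. Splitting according to whether $p\mid k$ yields
\[
\operatorname{dens}(S(1,k))=\prod_{p\nmid k}\Big(1-\frac{2}{p^2}\Big)\prod_{p\mid k}\Big(1-\frac{1}{p^2}\Big)=\prod_{p}\Big(1-\frac{2}{p^2}\Big)\prod_{p\mid k}\frac{1-1/p^2}{1-2/p^2},
\]
and since $\frac{1-1/p^2}{1-2/p^2}=\frac{p^2-1}{p^2-2}=1+\frac{1}{p^2-2}$, the second factor becomes $\prod_{p\mid k}\bigl(1+\frac{1}{p^2-2}\bigr)$. Because $k$ has only finitely many prime divisors, this regrouping is unconditionally valid, and combined with $\operatorname{dens}(S(1,k))\le\operatorname{dens}(V(1,k))$ (from $S(1,k)\subseteq V(1,k)$) it gives the stated inequality. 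I expect no genuine obstacle here: once Theorem~\ref{thm:sub} is available, the entire content is the elementary factorization $x(x+k)$ together with tracking the single collapsing root at the ramified primes $p\mid k$.
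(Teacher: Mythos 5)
Your proposal is correct and follows essentially the same route as the paper: both reduce to computing $f_{1,k}(p)=2$ for $p\nmid k$ and $f_{1,k}(p)=1$ for $p\mid k$ via the factorization $d(d+k)\equiv 0\Mod{p}$, then regroup the Euler product using $\frac{1-1/p^2}{1-2/p^2}=1+\frac{1}{p^2-2}$ and invoke the inclusion $S(1,k)\subseteq V(1,k)$ from Theorem~\ref{thm:sub}. The only cosmetic difference is that you identify the two roots directly as $d\equiv 0,-k$, whereas the paper writes $k\equiv -r\Mod{p}$ and lists the solutions as $\{p,r\}$.
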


Additionally, we run some numerical experiments to estimate\\ $\operatorname{dens}(V(a_n, a_{n-1},\dots,a_1))$ which motivates us to pose the following conjecture.

\begin{conjecture}[Visibility density conjecture]\label{conj}
For $(a_n, a_{n-1},\dots,a_1)\neq (1, 0,\dots,0)$,
\[
\operatorname{dens}(V(a_n, a_{n-1},\dots,a_1))=1.
\]
\end{conjecture}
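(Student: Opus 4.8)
The plan is to recast visibility arithmetically and then show that the non-visible points form a set of density $0$, so that $\operatorname{dens}(V)=1$. Write $P(x)=a_nx^n+\cdots+a_1x$; since every $a_i\ge 0$ and $a_n\ge 1$, the function $P$ is strictly increasing on the positive integers. A point $(a,b)\in\N^2$ lies on the curve $y=qP(x)$ with $q=b/P(a)$, and a competing lattice point $(x,qP(x))$ with $0<x<a$ exists exactly when $P(a)\mid bP(x)$. Hence $(a,b)$ is non-visible iff there is $0<x<a$ with $P(a)\mid bP(x)$, equivalently $t_{a,b}\mid P(x)$ where $t_{a,b}:=P(a)/\gcd(P(a),b)$. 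In particular $t_{a,b}=P(a)$ forces visibility (recovering $S\subseteq V$ of Theorem~\ref{thm:sub}, as $0<P(x)<P(a)$ for $x<a$), so the entire problem is to control the points with $\gcd(P(a),b)>1$.

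For fixed $a$ the non-visible $b$ form the union $\bigcup_{0<x<a} m_x\Z$ with $m_x=P(a)/\gcd(P(a),P(x))$, so a union bound gives
\[
\#\{b\le N:(a,b)\text{ non-visible}\}\le \min\Big(N,\ \tfrac{N}{P(a)}\,T(a)\Big),\qquad T(a):=\sum_{0<x<a}\gcd(P(a),P(x)).
\]
Dividing by $N^2$ and summing over $a$, it suffices to prove $\sum_{a\le N}\min\!\big(1,T(a)/P(a)\big)=o(N)$. Expanding $\gcd=\sum_{d\mid\gcd}\phi(d)$ and interchanging, $T(a)=\sum_{d\mid P(a)}\phi(d)N_d(a)$ with $N_d(a)=\#\{0<x<a:d\mid P(x)\}\le(\tfrac ad+1)f(d)$, where $f$ is the root-count of \eqref{fd}. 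This ties $T(a)$ directly to the arithmetic of $f$.

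The resulting ``main term'' $\tfrac{a}{P(a)}\sum_{d\mid P(a)}\tfrac{\phi(d)}{d}f(d)\le a_n^{-1}a^{1-n}\sum_{d\mid P(a)}f(d)$ is harmless on average. Interchanging the order of summation and using $\#\{a\le N:d\mid P(a)\}\approx \tfrac{f(d)}{d}N$, this average is bounded for $n\ge 2$ by $N^{-1}(\log N)^{O(1)}\sum_{d\le N^{\,n}}f(d)^2/d$, which tends to $0$ because $f(d)\ll_\varepsilon d^{\varepsilon}$. This is precisely where $P\ne x^n$ must enter: for the excluded family $f(p^k)=p^{\,k-\lceil k/n\rceil}$ grows and inflates exactly these sums (producing the positive defect $1-1/\zeta(n+1)$), whereas for a genuine polynomial the roots are few, $f(p^k)=f(p)\le n$ for all $p\nmid a_n\,\mathrm{disc}(P)$ by Hensel's lemma, with only finitely many exceptional primes contributing bounded local factors.

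The real obstacle is the contribution of the large divisors $d\mid P(a)$, equivalently the over-counting concealed in the union bound. For $d>\sqrt{P(a)}$ the crude estimate $N_d(a)\le P(a-1)/d$ only yields $\phi(d)N_d(a)\le P(a)$ per divisor, and summing over the $\approx\tau(P(a))$ such divisors (with $\tau$ the number-of-divisors function) does not beat $1$ on average. What is true, and what must be established, is that for all but $o(N)$ values of $a$ there is \emph{no} $x<a$ with $P(x)$ divisible by a large divisor of $P(a)$: such an $x$ would force $P(x)$ to be an honest multiple of a number exceeding $\sqrt{P(a)}$ while staying below $P(a)$, a rigid Diophantine constraint that the sparse value set of $P$ meets only rarely. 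Making this quantitative — e.g. through $\gcd(P(a),P(x))=\gcd\big(P(x),P(a)-P(x)\big)$ with $P(a)-P(x)=(a-x)\,Q(a,x)$, which for the simplest family $P(x)=x(x+1)$ reduces to controlling $\gcd\big(x(x+1),(a-x)(a+x+1)\big)$ — and then showing the exceptional $a$ number $o(N)$, is the heart of the conjecture and the step I expect to be hardest.
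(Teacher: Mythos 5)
The statement you are addressing is a \emph{conjecture}: the paper offers no proof of it, only a lower bound (Theorem~\ref{thm:sub}), numerical evidence (Section~\ref{sec:numerical}), and, for the single family $\mathcal{F}(1,1)$, a reduction to another unproven estimate (Conjecture~\ref{conj2}). Your proposal likewise is not a proof. Its opening reduction is sound and in fact reproduces the paper's own partial analysis: your criterion ``$(a,b)$ non-visible iff $P(a)\mid bP(x)$ for some $0<x<a$'' is exactly the paper's observation that the excluded $b$ are the multiples of $P(a)/\gcd(P(a),P(x))$, your union bound $T(a)/P(a)=\sum_{0<x<a}\gcd(P(a),P(x))/P(a)$ is precisely the paper's quantity $c_1(a)=\sum_i 1/b_i$ obtained by keeping the first term of the inclusion--exclusion in \eqref{eq:nan}, and the target $\sum_{a\le N}\min(1,T(a)/P(a))=o(N)$ is the content of \eqref{eq:conjeq1} and Conjecture~\ref{conj2}. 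You then go one step further than the paper by expanding $\gcd$ via $\sum_{d\mid\cdot}\phi(d)$ and isolating the large divisors of $P(a)$ as the obstruction, but you explicitly leave that step unproved (``the heart of the conjecture and the step I expect to be hardest''). Since that is exactly the step on which the whole claim turns, the proposal has a genuine and decisive gap; it stalls at essentially the same point where the paper itself stops.

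Two secondary inaccuracies are worth flagging. First, the bound $f(d)\ll_\varepsilon d^{\varepsilon}$ and the claim $f(p^k)=f(p)\le n$ for all but finitely many $p$ are valid only when $P$ is squarefree; the conjecture as stated also covers vectors such as $(1,2,1)$, for which $P(x)=x(x+1)^2$ and $f(p^k)\asymp p^{\lfloor k/2\rfloor}$, so your ``harmless main term'' estimate does not go through uniformly over the families the conjecture covers. Second, the heuristic $\#\{a\le N: d\mid P(a)\}\approx f(d)N/d$ is only legitimate for $d$ up to about $N$, whereas your divisor sum runs over $d\le N^{\,n}$; for the larger range the count is governed by individual solutions rather than an equidistribution estimate, which is another face of the large-divisor problem you already identified.
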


The proofs of Theorem~\ref{thm:sub} and Corollary~\ref{cor:densS1k} are presented in Section~\ref{sec:lower}. In Section~\ref{sec:numerical}, some numerical results are presented in support of Conjecture~\ref{conj} and we discuss the Conjecture~\ref{conj} for the special case $V(1,1)$ in detail in Section~\ref{sec:f11}. 

\section*{Notation}
We employ the Landau-Bachmann ``Big Oh" and ``Little Oh" notations $\operatorname{O}$ and $o$ with their usual meanings.  
Let $F(x)$ and $G(x)$ be two real or complex-valued functions. We say that $F(x)=\BigO{G(x)}$ if we can find a positive real number $C$ not depending on $x$ such that $|F(x)|\le C |G(x)|$ for large values of $|x|$. We use ``$\sim$'' notation when two functions are asymptotically equivalent. As usual, we write $\mu(n)$, $\zeta(s)$, and $\zeta_K(s)$ for the M\"{o}bius function, the Riemann zeta function, and the Dedekind zeta function attached to a number field $K$, respectively. The symbol $\mathbb{F}_p$ denotes the field of residue classes modulo a prime number $p$.   

\section*{Preliminaries}
In this section, we collect some preliminary results needed in later arguments. 
We shall be using the following version of the Chinese Remainder Theorem for polynomials in $\mathbb{Z}[x]$.
\begin{proposition}\label{prop:CRT}
Let $P(x)$ be a polynomial in $\Z[x]$. For a positive integer $d$ with prime factorization
\[
d=p_1^{e_1}\cdots p_r^{e_r},
\]
the congruence $P(x)\equiv 0\Mod{d}$ is solvable if and only if the congruences $P(x)\equiv 0\Mod{p_i^{e_i}}$ are solvable for all $1\le i\le r$. Moreover, if $P(x)\equiv 0\Mod{p_i^{e_i}}$ has $N_i$ solutions, then the congruence $P(x)\equiv 0\Mod{d}$ has $N_1 N_2 \cdots N_r$ solutions. 
\end{proposition}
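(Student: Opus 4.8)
The plan is to deduce everything from the classical Chinese Remainder Theorem for the ring $\Z$, applied at the level of residue classes. Since the prime powers $p_1^{e_1},\dots,p_r^{e_r}$ are pairwise coprime and their product is $d$, the CRT furnishes a ring isomorphism
\[
\psi:\Z/d\Z\;\longrightarrow\;\Z/p_1^{e_1}\Z\times\cdots\times\Z/p_r^{e_r}\Z,\qquad \psi(x\bmod d)=(x\bmod p_1^{e_1},\dots,x\bmod p_r^{e_r}).
\]
The crucial observation is that $\psi$, being induced by reduction, commutes with evaluation of any integer polynomial: for $P\in\Z[x]$ and any residue $a$ modulo $d$,
\[
\psi\big(P(a)\big)=\big(P(a)\bmod p_1^{e_1},\dots,P(a)\bmod p_r^{e_r}\big),
\]
since reduction modulo each $p_i^{e_i}$ respects addition and multiplication and therefore respects the polynomial expression $P$.

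From this it follows immediately that $P(a)\equiv 0\Mod{d}$ if and only if $\psi(P(a))$ is the zero tuple, which in turn holds if and only if $P(a)\equiv 0\Mod{p_i^{e_i}}$ for every $i$. Thus a residue $a$ solves the congruence modulo $d$ exactly when it solves all $r$ congruences modulo the prime powers simultaneously. The solvability statement then follows from the surjectivity of $\psi$: given a solution $a_i$ of $P(x)\equiv 0\Mod{p_i^{e_i}}$ for each $i$, the tuple $(a_1,\dots,a_r)$ has a unique $\psi$-preimage $a\bmod d$, and that preimage solves the congruence modulo $d$. Conversely, reducing any solution modulo $d$ produces solutions modulo each $p_i^{e_i}$, so solvability modulo $d$ forces solvability modulo every prime power.

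For the counting assertion, I would observe that $\psi$ restricts to a bijection between the solution set of $P(x)\equiv 0\Mod{d}$ in $\Z/d\Z$ and the product of the solution sets of $P(x)\equiv 0\Mod{p_i^{e_i}}$ in the factor rings. Indeed, by the previous paragraph $\psi$ carries solutions to tuples of solutions, and since $\psi$ is a bijection, this restriction is injective with image exactly the full product set. Comparing cardinalities then gives that the number of solutions modulo $d$ equals $N_1N_2\cdots N_r$.

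Since the entire mechanism is the ring isomorphism $\psi$, there is no serious obstacle; the only point deserving care is the claim that $\psi$ commutes with polynomial evaluation, as this is precisely what converts the additive and multiplicative content of the CRT into a statement about the zeros of $P$. Once that is recorded, both the equivalence of solvability and the multiplicativity of the solution count reduce to the bijectivity of $\psi$.
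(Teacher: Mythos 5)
Your proof is correct and complete. The paper itself gives no argument for this proposition, only a citation to a textbook (Theorem 4.13 of the reference \cite{Ku1992}); your derivation via the CRT ring isomorphism $\psi$, the observation that $\psi$ commutes with evaluation of integer polynomials, and the resulting bijection between the solution set modulo $d$ and the product of the solution sets modulo the $p_i^{e_i}$ is precisely the standard proof such a reference records, so there is nothing to add.
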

\begin{proof}
See, e.g., \cite[Theorem~4.13]{Ku1992}.
\end{proof}

In order to estimate the average of the function $f_{a_n,a_{n-1},\dots,a_1}(\ell)$, we will invoke a Tauberian Theorem given below.
\begin{proposition}\label{prop:Taub}
Let $\{h_n\}_{n\ge 1}$ be a strictly increasing and diverging real-valued sequence such that $h_1\ge 1$. Let  $\sum_{n=1}^\infty a_n h_n^{-s}$ be a Dirichlet series with non-negative coefficients and converges for $\Re(s)>1$ to a sum $f(s)$. Let $r\in \R\backslash\{-1,-2,-3,\dots\}$ such that 
\[
f(s)=(s-1)^{-r-1}g(s) + h(s),
\]
where $g$ and $h$ are holomorphic functions on some domain containing the closed half-plane $\Re(s)\ge 1$ and $g(1)\neq 0$. Then as $x\to \infty$, we have
\[
\sum_{h_n\le x} a_n \sim \frac{g(1)}{\Gamma(r+1)} x (\log x)^r.
\]
\end{proposition}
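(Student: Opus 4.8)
The plan is to read this as a Tauberian statement of Wiener--Ikehara--Delange type and to recover the asymptotic of $A(x):=\sum_{h_n\le x}a_n$ from the nature of the singularity of $f$ at $s=1$. First I would pass from the Dirichlet series to a Laplace transform: since $h_1\ge 1$, the substitution $x=e^t$ turns $f(s)=\int_1^\infty x^{-s}\,dA(x)$ into a Laplace--Stieltjes transform $\int_0^\infty e^{-st}\,dB(t)$ with $B(t)=A(e^t)$, whose abscissa of convergence is $\Re s=1$. The model behaviour is then governed by the elementary identity
\[
\int_0^\infty t^r e^{-(s-1)t}\,dt=\frac{\Gamma(r+1)}{(s-1)^{r+1}},\qquad \Re s>1,
\]
which already fixes the constant: a summatory function of size $C\,x(\log x)^r$ forces $f(s)\sim C\,\Gamma(r+1)\,(s-1)^{-r-1}$ near $s=1$, and matching against $f(s)\sim g(1)(s-1)^{-r-1}$ yields $C=g(1)/\Gamma(r+1)$.

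With the target constant identified, the substance is the Tauberian conversion of the boundary behaviour of $f$ on $\Re s=1$ into an asymptotic for $A(x)$, and here the non-negativity of the $a_n$ is indispensable, since it is exactly what forbids oscillation. One tempting route is to factor out the weight $e^{-t}$ and apply the Hardy--Littlewood--Karamata theorem to the measure $e^{-t}\,dB(t)$, whose transform satisfies $f(1+w)\sim g(1)\,w^{-r-1}$ as $w\to 0^+$; this yields cleanly
\[
\sum_{h_n\le x}\frac{a_n}{h_n}\sim\frac{g(1)}{\Gamma(r+2)}\,(\log x)^{r+1}.
\]
However, trying to recover $A(x)$ from this by partial summation produces a cancellation of the two leading terms, whose residual error swamps the sought main term, so the bare leading-order Karamata theorem is not enough. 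I would therefore invoke the finer Tauberian theorem of Delange directly for $A(x)$: subtract the singular model $g(1)(s-1)^{-r-1}$ (principal branch) from $f$, so that the remainder $(g(s)-g(1))(s-1)^{-r-1}+h(s)$ extends to a neighbourhood of the line $\Re s=1$ with only a strictly weaker singularity at $s=1$, and then run the Wiener--Ikehara argument: convolve the relevant boundary measure against a Fej\'er-type kernel of compactly supported Fourier transform, apply the Riemann--Lebesgue lemma to annihilate the regular part, and use positivity in a squeezing argument to transfer the asymptotic to $A(x)$ itself.

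The hard part will be making this last step rigorous across the branch-point singularity, since for non-integer $r$ the factor $(s-1)^{-r-1}$ is multivalued and one cannot simply push a contour past $s=1$. The clean device is to wrap the contour in a Hankel loop around the branch point and to read off the factor $1/\Gamma(r+1)$ from Hankel's representation of the reciprocal Gamma function; in the Tauberian formulation this amounts to checking that the contributions of $g(s)-g(1)$ and of the holomorphic term $h(s)$ are of order lower than $x(\log x)^r$. The generality of the frequencies $h_n$ is, by contrast, only a bookkeeping matter: throughout one works with the Stieltjes measure $\sum_n a_n\delta_{h_n}$ and its transforms rather than with an arithmetic function, and the hypotheses that $\{h_n\}$ is strictly increasing and diverges with $h_1\ge 1$ are precisely what guarantee that $B$ is a genuine distribution function and that all the transforms above are well defined.
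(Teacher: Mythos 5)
The paper does not prove this proposition at all: it is quoted as a known result (Delange's Tauberian theorem) with a one-line citation to Lemma~4.1 of the Bateman reference, so there is no internal argument to compare yours against beyond that appeal to the literature. Within your sketch, the identification of the constant via $\int_0^\infty t^r e^{-(s-1)t}\,dt=\Gamma(r+1)(s-1)^{-r-1}$ is correct, and your observation that Karamata's theorem applied to the measure $e^{-t}\,dB(t)$ only yields $\sum_{h_n\le x}a_n/h_n\sim g(1)(\log x)^{r+1}/\Gamma(r+2)$, from which partial summation cannot recover $A(x)$ because the leading terms cancel, is a genuine and correctly diagnosed obstruction.

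As a proof, however, the proposal is circular at its load-bearing step: the statement to be proved \emph{is} Delange's Tauberian theorem, so ``invoke the finer Tauberian theorem of Delange directly for $A(x)$'' assumes the conclusion. The Wiener--Ikehara outline you substitute for it (Fej\'er kernel, Riemann--Lebesgue, positivity squeeze) is the proof for a simple pole, i.e.\ $r=0$; extending it to arbitrary real $r\notin\{-1,-2,\dots\}$ --- in particular to non-integer or negative $r$, where $(s-1)^{-r-1}$ has a branch point rather than a pole --- is precisely the hard content of the theorem, and you explicitly leave it open. The Hankel-loop device you propose for that step also does not apply under the stated hypotheses: $g$ and $h$ are holomorphic only on \emph{some} domain containing the closed half-plane $\Re(s)\ge 1$, which need not contain any uniform strip $\Re(s)>1-\delta$, so one cannot wrap a contour around $s=1$ and control the integrand as $|\Im(s)|\to\infty$; that device belongs to the Selberg--Delange method, which assumes much stronger analytic continuation than is available here. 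The honest options are either to do what the paper does and cite the result, or to reproduce Delange's actual argument, which treats general $r$ by a separate approximation and induction scheme resting on the non-negativity of the coefficients rather than on contour integration.
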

\begin{proof}
See, e.g., \cite[Lemma~4.1]{Bateman1981}.
\end{proof}

We also recall the following well known Abel's summation formula. See \cite[Theorem~4.2]{Apostol1976}.
\begin{proposition}\label{prop:Summbyparts}
For any arithmetical function $a(n)$ let
\[
A(x)=\sum_{n\le x}a(n),
\]
where $A(x)=0$ if $x<1$. Assume the function $b$ has a continuous derivative on the interval $[y,x]$, where $0<y<x$. Then we have
\[
\sum_{y<n\le x}a(n)b(n)=A(x)b(x)-A(y)b(y)-\int_{y}^x A(t)b'(t)~dt.
\]
\end{proposition}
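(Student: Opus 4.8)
The plan is to prove this classical partial summation identity by reducing the sum with real cutoffs $y,x$ to a finite sum over integers, performing a discrete summation by parts against the step function $A$, and then converting the resulting discrete sum into the stated integral by exploiting that $A(t)$ is constant on each interval $[n,n+1)$. First I would set $m=\lfloor y\rfloor$ and $k=\lfloor x\rfloor$, so that $\sum_{y<n\le x}a(n)b(n)=\sum_{n=m+1}^{k}a(n)b(n)$, and record the elementary identity $a(n)=A(n)-A(n-1)$, valid at every integer $n$ because $A$ is a step function that jumps by exactly $a(n)$ at $n$.

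Next I would substitute $a(n)=A(n)-A(n-1)$ into the finite sum and reindex the second copy (replacing $n$ by $n+1$) to obtain the Abel rearrangement
\[
\sum_{n=m+1}^{k}a(n)b(n)=A(k)b(k)-A(m)b(m+1)+\sum_{n=m+1}^{k-1}A(n)\{b(n)-b(n+1)\}.
\]
The key analytic step is then to write each difference as $b(n)-b(n+1)=-\int_n^{n+1}b'(t)\,dt$, using the continuity of $b'$, and to pull the constant $A(n)$ inside the integrand via the fact that $A(t)=A(n)$ for $t\in[n,n+1)$. This collapses the remaining sum to $-\int_{m+1}^{k}A(t)b'(t)\,dt$, yielding an intermediate identity whose integral runs over $[m+1,k]$ rather than the desired $[y,x]$.

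The main obstacle is reconciling these integer limits with the real endpoints $y$ and $x$, which is where the bookkeeping demands care. I would handle the two boundary slivers separately: on $[y,m+1)$ one has $A(t)=A(m)$, so $\int_y^{m+1}A(t)b'(t)\,dt=A(m)\{b(m+1)-b(y)\}$, and on $[k,x]$ one has $A(t)=A(k)$, so $\int_k^{x}A(t)b'(t)\,dt=A(k)\{b(x)-b(k)\}$. Reassembling $-\int_{m+1}^{k}=-\int_y^{x}+\int_y^{m+1}+\int_k^{x}$, these correction terms cancel precisely against the boundary values $A(k)b(k)$ and $A(m)b(m+1)$ produced earlier, leaving $A(k)b(x)-A(m)b(y)-\int_y^{x}A(t)b'(t)\,dt$. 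Finally I would identify $A(m)=A(y)$ and $A(k)=A(x)$, immediate from $m=\lfloor y\rfloor$, $k=\lfloor x\rfloor$ and the definition of $A$, collapsing the expression to the claimed $A(x)b(x)-A(y)b(y)-\int_y^{x}A(t)b'(t)\,dt$. The only points deserving an explicit remark are the degenerate case $m=k$, where $(y,x]$ contains no integer and both sides reduce directly to $A(m)\{b(x)-b(y)\}-\int_y^x A(m)b'(t)\,dt=0$, and the convention $A(x)=0$ for $x<1$, which keeps the identity valid when $y<1$.
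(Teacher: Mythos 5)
Your proof is correct and complete: the reduction to the finite sum via $m=\lfloor y\rfloor$, $k=\lfloor x\rfloor$, the discrete Abel rearrangement using $a(n)=A(n)-A(n-1)$, the conversion of $b(n)-b(n+1)$ into $-\int_n^{n+1}b'(t)\,dt$ with $A$ constant on $[n,n+1)$, and the careful cancellation of the boundary slivers on $[y,m+1)$ and $[k,x]$ all check out, including the degenerate case $m=k$. The paper itself gives no proof of this proposition --- it simply cites Apostol's \emph{Introduction to Analytic Number Theory}, Theorem~4.2 --- and your argument is essentially the standard proof appearing in that reference, so there is nothing to reconcile between the two.
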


\section{Proof of main results}\label{sec:lower}
The function which counts the number of solutions of a polynomial congruence equation has been studied before by Nagell \cite{TrygveNagel1921}, Sondor \cite{MR47679}, Hooley \cite{MR163874} \cite{MR637347}, and most recently by the authors in \cite{MR4088807} among others in different contexts, and mainly for polynomials which are irreducible or without multiple roots in $\mathbb{C}$. In the following lemma, we study this function for any polynomial $P(x)\in\mathbb{Z}[x]$. We will use this lemma in conjunction with Proposition \ref{prop:Taub} to prove our main result. 

\begin{lemma}\label{lem:pole} Let $P(x)\in \Z[x]$ and
\[
f_P(\ell)=|\{1\le a\le \ell | P(a)\equiv 0 \Mod{\ell}\}|.
\]
Then $f_P$ is multiplicative, and the Dirichlet series
\[
F(s)=\sum_{\ell=1}^\infty \frac{f_P(\ell)}{\ell^s}=\prod_{p \; \text{prime}} \left( 1+\sum_{k=1}^\infty \frac{f(p^k)}{p^{ks}}\right)
\]
is absolutely convergent in $\Re(s)>1$ and has a pole of order $J$ at $s=1$ where
\[
P(x)=\prod_{i=1}^J m_i(x)^{e_i} 
\]
is the factorization of $P(x)$ into irreducible polynomials $m_i(x)\in \Z[x]$.
\end{lemma}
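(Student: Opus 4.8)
The plan is to establish the three assertions---multiplicativity, the Euler factorization together with its region of absolute convergence, and the order of the pole at $s=1$---in that order, the last being the substantial part.

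First I would deduce multiplicativity directly from Proposition~\ref{prop:CRT}: if $\gcd(m,n)=1$, then the Chinese Remainder Theorem puts the residues $a\Mod{mn}$ solving $P(a)\equiv 0\Mod{mn}$ into bijection with pairs of residues solving $P(a)\equiv 0\Mod{m}$ and $P(a)\equiv 0\Mod{n}$, whence $f_P(mn)=f_P(m)f_P(n)$. Multiplicativity yields at once the Euler product $F(s)=\prod_p\big(1+\sum_{k\ge 1}f_P(p^k)p^{-ks}\big)$. For absolute convergence in $\Re(s)>1$ the binding constraint is the linear terms: a nonzero polynomial of degree $d=\deg P$ has at most $d$ roots modulo a prime, so $f_P(p)\le d$ and $\sum_p f_P(p)p^{-s}$ converges exactly for $\Re(s)>1$. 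The higher prime-power terms satisfy a standard bound of the shape $f_P(p^k)\ll_P p^{k\theta}$ with some $\theta<1$ (readable off from the rationality of the local series in $p^{-s}$), so they converge in a strictly larger half-plane and do not move the abscissa.

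The core of the lemma is the pole order, and the strategy is to isolate the singular behaviour of the linear terms. Grouping the Euler product and taking logarithms gives $\log F(s)=\sum_p f_P(p)p^{-s}+R(s)$, where $R(s)$ collects the $k\ge 2$ contributions and the $\operatorname{O}(p^{-2s})$ corrections from expanding each logarithm. I would show that $\sum_p f_P(p)p^{-s}$ has a logarithmic singularity of strength exactly $J$ at $s=1$ by an arithmetic identification: for all but finitely many $p$ (those dividing the leading coefficient or the discriminant of the radical of $P$), the residues solving $P(a)\equiv 0\Mod{p}$ are precisely the roots of the distinct irreducible factors $m_1,\dots,m_J$, and for each $i$ the number of roots of $m_i$ modulo $p$ equals the number of degree-one primes of $K_i=\Q[x]/(m_i(x))$ above $p$. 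The prime ideal theorem for $K_i$ then gives $\sum_p(\#\{\text{roots of }m_i\bmod p\})p^{-s}=\log\zeta_{K_i}(s)+(\text{holomorphic at }s=1)$, and since $\zeta_{K_i}(s)$ has a simple pole at $s=1$, each distinct factor contributes one unit of logarithmic singularity. Summing over the $J$ factors produces the expected $(s-1)^{-J}$.

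The hard part, and the step I expect to be the main obstacle, is showing that the remainder $R(s)$ extends holomorphically across $s=1$, so that the pole order is exactly $J$ and not larger. For the $\operatorname{O}(p^{-2s})$ tails this is routine; the delicate term is the genuine higher prime-power count $\sum_p\sum_{k\ge 2}f_P(p^k)p^{-ks}$. When $P$ is squarefree this is benign: Hensel's lemma forces $f_P(p^k)=f_P(p)$ for every $p\nmid\operatorname{disc}(P)$ and $k\ge 1$, so the $k\ge 2$ part differs from a convergent multiple of the $k=1$ part by a function holomorphic and nonvanishing at $s=1$, and the order is cleanly $J$ (here $J=\sum_i e_i$). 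A repeated factor is more subtle, since the local root count can grow with $k$---already $f_{x^2}(p^k)=p^{\lfloor k/2\rfloor}$---and such growth threatens to inject additional $\zeta(2s-1)$-type poles at $s=1$. A faithful treatment therefore reduces, $p$-adically and root by root, to the model polynomials $x^{e_i}$ and computes the local factor $\sum_{k\ge 0}f_P(p^k)p^{-ks}$ as an explicit rational function of $p^{-s}$, keeping careful account of which of its factors are singular at $s=1$; this bookkeeping is precisely where the exact pole order is decided, and where I would concentrate the effort.
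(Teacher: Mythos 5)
Your treatment of the squarefree case is, modulo the passage from the Euler product to its logarithm, the same argument the paper gives: both rest on Hensel's lemma to freeze $f_P(p^k)=f_P(p)$ for $p$ prime to the discriminant, and on Dedekind's theorem identifying $f_{m_i}(p)$ with the number of degree-one primes of $K_i=\Q[x]/(m_i(x))$ above $p$, so that each distinct irreducible factor contributes the simple pole of one $\zeta_{K_i}$ at $s=1$. That part of your proposal is complete and matches the paper.

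The repeated-factor case, which you flag as the main obstacle and leave unfinished, is therefore the genuine gap in your write-up --- but your suspicion about $\zeta(2s-1)$-type singularities is exactly right, and pushing your own example one step further shows that the statement you are asked to prove is false as written rather than merely delicate. For $P(x)=x^2$ one has $f_P(p^k)=p^{\lfloor k/2\rfloor}$, the local factor is $(1+p^{-s})/(1-p^{1-2s})$, and hence $F(s)=\zeta(s)\zeta(2s-1)/\zeta(2s)$ has a \emph{double} pole at $s=1$ while $J=1$; in general the order of the pole is $\sum_i e_i$, not the number $J$ of distinct irreducible factors. The paper's own proof of the reducible case obscures this by asserting $f_P(p^\ell)=\sum_{i=1}^J f_{m_i}(p^\ell)$ for all $\ell\ge 1$ and all good $p$, an identity which already fails for $P=m^2$ and $\ell=2$ (there $f_P(p^2)=f_m(p)\,p$, not $f_m(p)$); it is valid only when every $e_i=1$. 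So you should not attempt to close the gap as stated. The honest repair is either to add the hypothesis that $P$ is squarefree --- which covers the polynomials $x(x+k)$ actually used in Lemma~\ref{meanf} and Corollary~\ref{cor:densS1k}, and in any case only the finiteness of the pole order enters the error term of Theorem~\ref{thm:sub} --- or to carry out the local computation you sketch (reducing, root by root, to $m_i(a)\equiv 0\Mod{p^{\lceil k/e_i\rceil}}$) and replace $J$ by $\sum_i e_i$ in the conclusion of Lemma~\ref{lem:pole}.
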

\begin{proof}
The multiplicativity of $f_P$ follows from the Chinese Remainder Theorem for polynomial congruences given in Proposition~\ref{prop:CRT}. Let us assume that $P(x)$ is irreducible. We will show that $F(s)$ is absolutely convergent in $\Re(s)>1$ and has a simple pole at $s=1$. Let $D\neq 0$ be the discriminant of $P(x)$. We can break $F(s)$ as
\[
F(s)=\prod_{p\mid D}  \left( 1+\sum_{k=1}^\infty \frac{f_P(p^k)}{p^{ks}}\right)\prod_{p\nmid D}  \left( 1+\sum_{k=1}^\infty \frac{f_P(p^k)}{p^{ks}}\right).
\]
The first product above is a finite product since $D\neq 0$. By Lagrange's theorem \cite[Chapter 7]{MR2445243}, $P(x)$ has at most $\deg(P)$ roots modulo $p$ and so $f_P(p)\le \deg(P)$ for all primes $p$. Moreover, by Hensel's lifting lemma \cite[Chapter 7]{MR2445243}, for  primes $p$ not dividing $D$, a solution modulo $p$ has a unique lift modulo $p^k$ for every $k\ge 2$, hence $f_P(p^k)=f_P(p)\le \deg(P)$. For the primes $p$ dividing $D$, each solution modulo $p$ can have at most $p$ lifts modulo $p^2$ and so on, so that $f(p^k)\le \deg(P)p^{k-1}$ for the finitely many primes $p$ dividing $D$. Moreover in this case, the number of solutions $f(p^k)$ is bounded by $\deg(P)|D|^{1/2}$ (see, \cite{MR637347}). This ensures that the finite product is analytic and non-vanishing for $\Re(s)>0$. Combining these we have
\begin{align*}
F(s)=&\prod_{p\mid D}\left(1+\BigO{|p^{-s}|}\right)\prod_{p\nmid D} \left(1+f_P(p)\frac{p^{-s}}{1-p^{-s}}\right)\\
=&\prod_{p\mid D}\left(1+\BigO{|p^{-s}|}\right)\prod_{p\nmid D} \left(1+f_P(p)p^{-s}\right)\left(1+\BigO{|p^{-2s}|}\right).
\end{align*}
For primes $p$ not dividing the discriminant $D$, by an application of Dedekind's theorem (see, e.g., Theorem 5.5.1, \cite{MR2090972}), $f_P(p)$ is the number of prime ideals of ring of integers, $\mathcal{O}_K$, where $K=\Q[x]/(P(x))$, with residue field $\mathbb{F}_p$. Therefore, 
\begin{align*}
    \prod_{p\nmid D} \left(1+f_P(p)p^{-s}\right)&\left(1+\BigO{|p^{-2s}|}\right)\\ =&\prod_{p\nmid D}\left(\frac{1}{1-p^{-s}}\right)^{-f_P(p)}=\prod_{p\nmid D}\prod_{\substack{p\in \mathcal{P}\subset \mathcal{O}_K\\||\mathcal{P}||=p}}\frac{1}{1-||\mathcal{P}||^{-s}}\\
    =&\prod_{p\nmid D}\prod_{p\in \mathcal{P}\subset \mathcal{O}_K}\frac{1}{1-||\mathcal{P}||^{-s}}\prod_{\substack{p\in \mathcal{P}\subset \mathcal{O}_K\\||\mathcal{P}||\neq p}}(1-||\mathcal{P}||^{-s})\\
    =&\zeta_K(s)\prod_{p\mid D}\prod_{p\in \mathcal{P}\subset \mathcal{O}_K}(1-||\mathcal{P}||^{-s})\prod_p (1+\BigO{|p^{-2s}|})\\
    =&\zeta_K(s)\prod_{p\mid D}(1+\BigO{|p^{-s}|})\prod_p (1+\BigO{|p^{-2s}|}).
\end{align*}
where $||\mathcal{P}||$ denotes the norm of a prime ideal $\mathcal{P}$. It follows from above that
\[
F(s)=\zeta_K(s)\prod_{p\mid D}(1+\BigO{p^{-s}})\prod_p (1+\BigO{p^{-2s}}).
\]
The first product above is finite and non-vanishing everywhere in $\C$ while the second product is absolutely convergent and non-vanishing for $\Re(s)>1/2$. Since $\zeta_K$ has a simple pole at $s=1$, it follows that $F(s)$ is absolutely convergent in $\Re(s)>1$ with a simple pole at $s=1$.

Next, assume that $P(x)$ is reducible and has a factorization
\[
P(x)=\prod_{i=1}^J m_i(x)^{e_i} 
\]
in irreducible polynomials $m_i(x)\in \Z[x]$. We proceed along the lines of irreducible case with $D$ replaced by $\prod_{i=1}^J D_i$, where $D_i\neq 0$ is discriminant of $m_i(x)$. For primes $p$ not dividing $\prod_{i=1}^J D_i$, $m_i\Mod{p}$ are separable and coprime and hence, they do not have any root in common. Therefore, for primes $p$ not dividing $\prod_{i=1}^J D_i$
\[
f_P(p^\ell)=\sum_{i=1}^J f_{m_i}(p^\ell) \quad \text{for all } \ell\ge 1.
\]
Using the above and following the proof of the irreducible case, we arrive at
\[
F(s)=\prod_{i=1}^J \zeta_{K_i} (s)\prod_{p\mid \prod_{i=1}^J D_i}(1+\BigO{p^{-s}})\prod_p (1+\BigO{p^{-2s}})
\]
where $\zeta_{K_i}$ is the Dedekind zeta function of the number field $K_i=\Q[x]/(m_i(x))$. Since each of $\zeta_{K_i}$ has a simple pole at $s=1$, it follows that $F(s)$ is absolutely convergent in $\Re(s)>1$ with a pole at $s=1$ of order $J$.
\end{proof}

\begin{lemma} \label{meanf}
The function $f_{a_n,a_{n-1},\dots,a_1}$ in \eqref{fd} is multiplicative. As $x\rightarrow\infty$, we have
\[
\sum_{t\le x}f_{a_n,a_{n-1},\dots,a_1}(t)\sim Cx({\log x})^{J-1},
\]
where $J\ge 2$ is the number of distinct irreducible factors of the polynomial $a_nx^n+a_{n-1}x^{n-1}+\cdots+a_1x\in \Z[x]$.
\end{lemma}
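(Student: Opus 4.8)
The plan is to recognise that the function $f_{a_n,a_{n-1},\dots,a_1}$ is precisely the solution-counting function $f_P$ of Lemma~\ref{lem:pole} for the specific polynomial $P(x)=a_nx^n+a_{n-1}x^{n-1}+\cdots+a_1x\in\Z[x]$, and then to feed the analytic information about its Dirichlet series into the Tauberian theorem of Proposition~\ref{prop:Taub}. With this identification the multiplicativity is immediate from Lemma~\ref{lem:pole}, so the only real content is the asymptotic for the summatory function.

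First I would invoke Proposition~\ref{prop:Taub} with $h_t=t$ and with Dirichlet coefficients $f_P(t)$. The sequence $\{h_t\}$ is strictly increasing, diverging, with $h_1=1\ge 1$, and the coefficients $f_P(t)$ are non-negative; moreover, by Lemma~\ref{lem:pole}, the associated series $F(s)=\sum_{t\ge 1}f_P(t)\,t^{-s}$ converges absolutely for $\Re(s)>1$. Thus the standing hypotheses of Proposition~\ref{prop:Taub} are satisfied, and what remains is to present $F$ in the required shape $(s-1)^{-r-1}g(s)+h(s)$.

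The key step is to read off the precise pole structure from the proof of Lemma~\ref{lem:pole}, which establishes the factorization
\[
F(s)=\prod_{i=1}^J \zeta_{K_i}(s)\,\cdot\,E(s),
\]
where $E(s)$ is the product of a finite Euler product over the primes dividing the relevant discriminants, holomorphic and non-vanishing on all of $\C$, with an Euler product that converges absolutely and is non-vanishing for $\Re(s)>1/2$; hence $E$ is holomorphic on a domain containing the closed half-plane $\Re(s)\ge 1$. Each Dedekind zeta function $\zeta_{K_i}$ continues meromorphically to $\C$ with a single simple pole at $s=1$, so $(s-1)\zeta_{K_i}(s)$ is entire with value at $s=1$ equal to the positive residue furnished by the class number formula. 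Consequently
\[
g(s):=(s-1)^J F(s)=\prod_{i=1}^J\big((s-1)\zeta_{K_i}(s)\big)\,E(s)
\]
is holomorphic on a neighbourhood of $\Re(s)\ge 1$ and $g(1)$ is a product of nonzero factors, so $g(1)\neq 0$. Taking $h\equiv 0$ and $r=J-1$ (admissible since $J\ge 2$ forces $r\ge 1\notin\{-1,-2,\dots\}$), Proposition~\ref{prop:Taub} then yields
\[
\sum_{t\le x}f_{a_n,a_{n-1},\dots,a_1}(t)\sim \frac{g(1)}{\Gamma(J)}\,x(\log x)^{J-1},
\]
which is the claimed asymptotic with $C=g(1)/(J-1)!>0$.

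The main obstacle I anticipate is verifying cleanly that $g$ is holomorphic on an open set containing the \emph{closed} half-plane $\Re(s)\ge 1$ and that $g(1)\neq 0$; this rests on the global meromorphic continuation of the Dedekind zeta functions together with the non-vanishing of the correction Euler products at $s=1$, both of which are packaged in the proof of Lemma~\ref{lem:pole}. Once the pole of order exactly $J$ and the non-vanishing of the leading factor are secured, the conclusion is a direct substitution into the Tauberian theorem.
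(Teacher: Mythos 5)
Your proposal is correct and follows essentially the same route as the paper: identify $f_{a_n,\dots,a_1}$ with $f_P$ from Lemma~\ref{lem:pole}, extract the order-$J$ pole at $s=1$, and apply the Tauberian theorem of Proposition~\ref{prop:Taub} with $r=J-1$. You are in fact more explicit than the paper about verifying the hypotheses (writing $F(s)=(s-1)^{-J}g(s)$ with $g$ holomorphic near $\Re(s)\ge 1$ and $g(1)\neq 0$), which is a welcome addition rather than a deviation.
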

\begin{proof}
Notice that $f_{a_n,a_{n-1},\dots,a_1}$ is same as $f_P$ in Lemma~\ref{lem:pole} for the polynomial $P(x)=a_nx^n+a_{n-1}x^{n-1}+\cdots+a_1x$. Using Lemma~\ref{lem:pole} for polynomial $a_nx^n+a_{n-1}x^{n-1}+\cdots+a_1x$, we find that the Dirichlet series for $f_{a_n,a_{n-1},\dots,a_1}(t)$ converges absolutely for $\Re(s)>1$ with a pole at $s=1$ of order $J$, where $J$ is the number of distinct irreducible factors of the polynomial $a_nx^n+a_{n-1}x^{n-1}+\cdots+a_1x$. By Tauberian Theorem in Proposition~\ref{prop:Taub}, the result follows. 
\end{proof}

\subsection{Proof of Theorem~\ref{thm:sub}}\label{sec:proof}
Let $(a,b)\in S(a_n,a_{n-1},\dots,a_1)$, then $\gcd(a(a_na^{n-1}+a_{n-1}a^{n-2}+\cdots+a_1),b)=1$ which implies $\gcd(a,b)=1$, and therefore, by definition, $(a,b)\in S(1)$. Moreover, if we choose $q_0=b/(a_na^{n}+a_{n-1}a^{n-1}+\cdots+a_1a)\in \Q^+$, then $(a,b)$ lies on the curve
\begin{equation}\label{eq:curve}
y=q_0(a_n x^n+a_{n-1} x^{n-1}+\cdots +a_1 x).
\end{equation}
Let $(a',b')\in \N^2$ be a point on the curve \eqref{eq:curve} between the origin and $(a,b)$, that is, $a'<a$. Then $a_na^{n}+a_{n-1}a^{n-1}+\cdots+a_1a$ must divide $b(a_n(a')^{n}+a_{n-1}(a')^{n-1}+\cdots+a_1a')$ but since
\[
\gcd(a_na^{n}+a_{n-1}a^{n-1}+\cdots+a_1a,b)=1
\]
it forces that $a_na^{n}+a_{n-1}a^{n-1}+\cdots+a_1a$ must divide $a_n(a')^{n}+a_{n-1}(a')^{n-1}+\cdots+a_1a'$ which is a contradiction since $a'<a$ implies
\[
a_n(a')^{n}+a_{n-1}(a')^{n-1}+\cdots+a_1a'<a_na^{n}+a_{n-1}a^{n-1}+\cdots+a_1a.
\]
Therefore, there is no point of $\N^2$ on the curve \eqref{eq:curve} between the origin and $(a,b)$. By Definition~\ref{def:vis},
$(a,b)\in V(a_n,a_{n-1},\dots,a_1)$. This proves the first part of Theorem~\ref{thm:sub}. For the second part,
consider 
\begin{align*}
|S(a_n,a_{n-1},\dots,a_1)\cap R_N|=\qquad \qquad \qquad \qquad \qquad \qquad \qquad \qquad \qquad \qquad &\\
|\{1\le a, b\le N \ | \  \gcd(a_na^n+a_{n-1}a^{n-1}+\cdots+a_1a,b)=1\}.
\end{align*}
To simplify the notation, let
\[
A_n:=a_na^n+a_{n-1}a^{n-1}+\cdots+a_1a.
\]
Using the M\"{o}bius identity,
\[
\sum_{d|n}\mu(n)=\begin{cases}
1 & n=1,\\
0 & \text{otherwise},
\end{cases}
\] we have
\begin{align}
    |S(a_n,a_{n-1},\dots,a_1)\cap R_N|&=\sum_{\substack{a\le N \\ b\le N}}\sum_{\substack{d|A_n\\d|b}}\mu(d)=\sum_{d\le N}\mu(d)\sum_{\substack{a\le N \\ d|A_n}}\sum_{b\le N/d}1 \nonumber\\
    &=\sum_{d\le N}\mu(d)\floor[\Big]{\frac{N}{d}}S_N(d)\nonumber\\
    &=\sum_{d\le N}\mu(d)\left(\frac{N}{d}S_N(d)+\BigO{S_N(d)}\right), \label{densS1}
\end{align}
where 
\begin{align}
   S_N(d)&=|\{1\le a\le N \ | \ A_n\equiv 0\Mod{d} \}|\nonumber\\&= 
   \frac{N}{d}f_{a_n,a_{n-1},\dots,a_1}(d)+\BigO{f_{a_n,a_{n-1},\dots,a_1}(d)}.
\end{align}
Here $f_{a_n,a_{n-1},\dots,a_1}(d)$ is as in \eqref{fd}. 
The above identity implies that in order to estimate \eqref{densS1}, we need to estimate the following two sums 
\[
S_1:=N^2\sum_{d\le N}\frac{\mu(d)f_{a_n,a_{n-1},\dots,a_1}(d)}{d^2},
\]
and
\[
S_2:=N\sum_{d\le N}\frac{\mu(d)f_{a_n,a_{n-1},\dots,a_1}(d)}{d}.
\]
The sum $S_2$ can be estimated using Abel's Summation Formula in Proposition~\ref{prop:Summbyparts}, where we take $a(n)=f_{a_n,a_{n-1},\dots,a_1}(n)$ and $b(n)=1/n.$ Then the partial sum $A(x)$ for $a(n)$ is estimated by Lemma~\ref{meanf}, and we obtain 
\[
S_2=\BigO{N\sum_{d\le N}\frac{f(d)}{d}}=\BigO{\frac{(\log N)^J}{N}}.
\]
By multiplicativity of $f_{a_n,a_{n-1},\dots,a_1}$ established in Lemma~\ref{meanf}, 
\[\lim_{N\rightarrow\infty}\frac{S_1}{N^2}=\prod_{p \text{ prime }}\left(1-\frac{f_{a_n,a_{n-1},\dots,a_1}(p)}{p^2}\right).\] Moreover, 
\[\lim_{N\rightarrow\infty}\frac{S_2}{N^2}=0.\]
Combining the above two estimates, we get the density result in Theorem~\ref{thm:sub}.

\subsection{Proof of Corollary~\ref{cor:densS1k}}\label{subsec:proof3}
Appealing to Theorem \ref{thm:sub}, the calculation for the density of $S(1,k)$ reduces to finding $f_{1,k}(p)$ for $k\ge 1$, where $p$ is prime.
For $k=1$, 
\[
f_{1,1}(p)=|\{1\le a\le p \ | \ p\mid a^2+a\}|=|\{p-1,p\}|=2
\]
for all primes $p$. Then the expression for $\operatorname{dens}(S(1,1))$ follows from Theorem \ref{thm:sub}.
For $k\geq 2$, we compute $f_{1,k}(p)$ when $p\mid k$ and $p\nmid k$, separately. When $p\nmid k$, then $k\equiv -r \Mod{p}$ for some $1\leq r\leq p-1$ which implies that $p\mid k+r$ and hence $f_{1,k}(p)=|\{p,r\}|=2$. When $p\mid k$, we have trivially, $f_{1,k}(p)=|\{p\}|=1$. Combining, we have
\begin{align*}
\operatorname{dens}(S(1,k))&=\prod_{\substack{p\mid k }}\Big(1-\frac{1}{p^2}\Big)\prod_{\substack{p\nmid k}}\Big(1-\frac{2}{p^2}\Big)\\
&=\prod_{\substack{p }}\Big(1-\frac{2}{p^2}\Big)\prod_{\substack{p\mid k}}\Big(1+\frac{1}{p^2-2}\Big).
\end{align*}
From Theorem~\ref{thm:sub}, $S(1,k)\subseteq V(1,k)$ and the result follows.
\section{Numerical results}\label{sec:numerical}
If a point $(a,b)$ is $\mathcal{F}(a_n, a_{n-1},\dots,a_1)$-visible, then there exists a $q\in \Q$ such that $b=q(a_n a^n + a_{n-1} a^{n-1}+\cdots+ a_1 a)$, that is, 
\begin{equation}\label{eq:q}
q = \frac{b}{a_n a^n + a_{n-1} a^{n-1}+\cdots+ a_1 a} .
\end{equation}
Therefore, every $\mathcal{F}(a_n, a_{n-1},\dots,a_1)$-visible point is associated to a rational of the form~\eqref{eq:q}. In fact, if
\begin{equation}\label{eq:qalphabeta}
    \Q_N(a_n, a_{n-1},\dots,a_1)=\Big\{\tfrac{b}{a_n a^n + a_{n-1} a^{n-1}+\cdots+ a_1 a} \ \Big | \ a,b \in \N, 1\le a,b\le N \Big\}
\end{equation}
denotes the set of rationals of the form~\eqref{eq:q}, one obtains the following result.
\begin{proposition}\label{lem:bij}
For a given vector $(a_n, a_{n-1},\dots,a_1)$ as in Definition \ref{def:vis}, 
\[
|V(a_n, a_{n-1},\dots,a_1)\cap R_N|=|\Q_N(a_n, a_{n-1},\dots,a_1)|,
\]
where $R_N$ is given in \eqref{eq:RN}.
\end{proposition}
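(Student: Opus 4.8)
The plan is to establish a bijection between the set $V(a_n,a_{n-1},\dots,a_1)\cap R_N$ of visible points in the box and the set $\Q_N(a_n,a_{n-1},\dots,a_1)$ of rationals, since equality of finite cardinalities follows once such a bijection is exhibited. First I would set up the natural map $\Phi$ sending a visible point $(a,b)$ to the rational $q=b/(a_n a^n+\cdots+a_1 a)$ given by \eqref{eq:q}; by the definition of visibility, every point in $V(a_n,\dots,a_1)\cap R_N$ does lie on such a curve, so $\Phi$ is well defined and clearly lands in $\Q_N(a_n,\dots,a_1)$.

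The crux is to show that $\Phi$ is a bijection, and I would argue this by showing that each rational $q\in\Q_N$ is the image of exactly one visible point in the box. Fix $q\in\Q_N$. The curve $y=q(a_n x^n+\cdots+a_1 x)$ passes through the origin, and along it the value of $a_n x^n+\cdots+a_1 x$ is strictly increasing for $x\ge 1$ (all coefficients are nonnegative with $a_n\neq 0$), so among the lattice points $(a,b)\in\N^2$ lying on this curve there is a unique one with smallest positive abscissa $a$; by Definition~\ref{def:vis} this smallest-$a$ point is precisely the $\mathcal{F}$-visible point on the curve. The key step is then to verify that this unique visible point actually lies inside $R_N$, i.e. that both its coordinates are at most $N$. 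Its abscissa $a$ is at most the abscissa of any lattice point producing $q$, so $a\le N$ follows from $q\in\Q_N$; and because $q=b/(a_n a^n+\cdots+a_1 a)$ with the polynomial increasing, the minimal $a$ also yields the minimal ordinate $b$, giving $b\le N$ as well. This simultaneously shows $\Phi$ is surjective onto $\Q_N$ and that distinct visible points give distinct rationals (injectivity), since a visible point is uniquely recovered from its rational as the minimal-abscissa lattice point on the associated curve.

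I expect the main obstacle to be the bookkeeping that guarantees the minimal-abscissa point on each curve lands back inside $R_N$ rather than escaping the box. Concretely, for a rational $q=b_0/(a_n a_0^n+\cdots+a_1 a_0)$ arising from some $(a_0,b_0)\in R_N$, one must check that reducing to the visibility point (the smallest $a$ with $q(a_n a^n+\cdots+a_1 a)\in\N$) cannot increase either coordinate; this is where monotonicity of the polynomial on $x\ge 1$ does the work, ensuring both $a\le a_0\le N$ and $b\le b_0\le N$. Once this containment is secured, the correspondence $q\mapsto(\text{minimal visible point})$ is a genuine two-sided inverse to $\Phi$, and the equality of cardinalities is immediate.
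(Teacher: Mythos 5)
Your proof is correct and follows essentially the same route as the paper, which simply observes that each visible point lies on a unique curve $y=qP(x)$ and lets the bijection with $\Q_N$ follow. The paper's version is a one-line assertion, whereas you supply the one genuinely non-obvious detail it leaves implicit --- that the minimal-abscissa lattice point on the curve attached to each $q\in\Q_N$ stays inside $R_N$, via monotonicity of the polynomial on $x\ge 1$ --- so your write-up is a correct, fleshed-out version of the same argument.
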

\begin{proof}
A point $(a,b)\in V(a_n, a_{n-1},\dots,a_1)\cap R_N$ lies on a unique curve of the type 
\[
y = q (a_n x^{n}+a_{n-1}x^{n-1}+\cdots+a_1 x),
\]
where $q$ is as in \eqref{eq:q}. Hence, the result follows.
\end{proof}

Using Proposition~\ref{lem:bij} and Definition~\ref{def:density}, $\operatorname{dens}(V(a_n, a_{n-1},\dots,a_1))$ is given by
\begin{align}
    \operatorname{dens}(V(a_n, a_{n-1},\dots,a_1))=&\lim_{N\to \infty} \frac{|V(a_n, a_{n-1},\dots,a_1)\cap R_N|}{N^2} \nonumber\\
    =& \lim_{N\to \infty} \frac{|\Q_N(a_n, a_{n-1},\dots,a_1)|}{N^2}. \label{eq:densitynew}
\end{align}
To estimate $\operatorname{dens}(V(a_n, a_{n-1},\dots,a_1))$ numerically, for fixed $a_n, a_{n-1},\dots,a_1$ and a given $N$, we write a code to find $|\Q_N(a_n, a_{n-1},\dots,a_1)|$ and then approximate the density using \eqref{eq:densitynew}. Table~\ref{tab:density} provides the approximate densities of $V(a_n, a_{n-1},\dots,a_1)$ for different values of $a_n, a_{n-1},\dots,a_1$ calculated numerically. The visible lattice points along the polynomial families given in Table~\ref{tab:density} is shown in Figure~\ref{fig:vispoints}. One observes from Figure~\ref{fig:vispoints} that with increasing degree of the polynomial, non-visible points are rare. 

\begin{table}[ht!]
    \centering
    \begin{tabular}{|c|c|}
    \hline
        Polynomial & $|\Q_N(a_n, a_{n-1},\dots,a_1)|/N^2$  \\\hline
         $x^2 + x$ & $0.983$  \\ \hline 
         $2x^3 + 15x$ & $0.999$  \\ \hline 
          $6x^4 + 4x^2+13x$ &  $0.999$ \\ \hline 
           $7x^5 + 12x^4+4x^3+x^2+11x$ &  $0.999$ \\ \hline 
    \end{tabular}
    \caption{Approximate densities for some randomly chosen polynomial families. Here, $N=5000$.}
    \label{tab:density}
\end{table}

\begin{figure}[ht!]
  \begin{subfigure}{6cm}
    \centering\includegraphics[width=5cm]{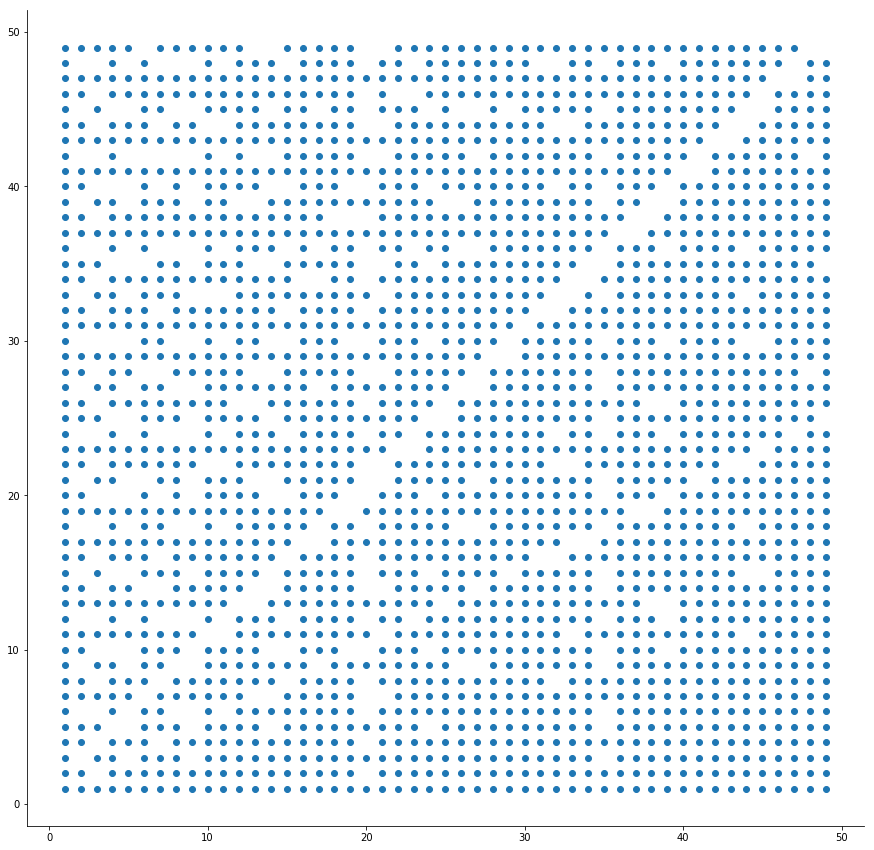}
    \caption{$V(1,1)\cap R_{50}$}
  \end{subfigure}
  \begin{subfigure}{6cm}
    \centering\includegraphics[width=5cm]{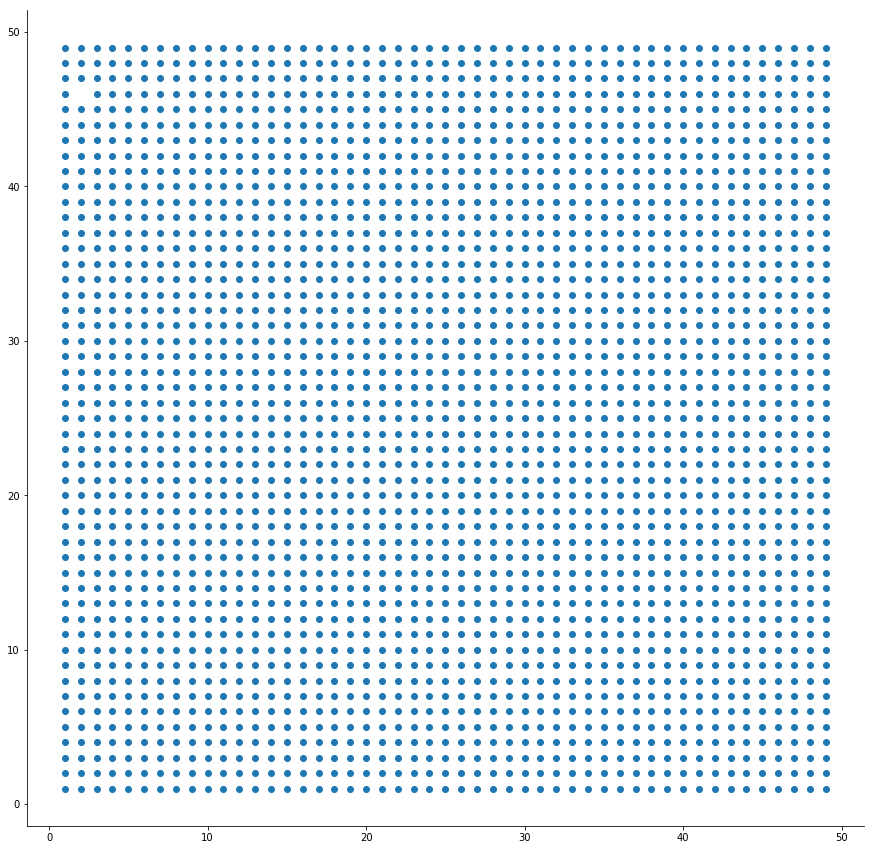}
    \caption{$V(2,0,15)\cap R_{50}$}
  \end{subfigure}
 
  \begin{subfigure}{6cm}
    \centering\includegraphics[width=5cm]{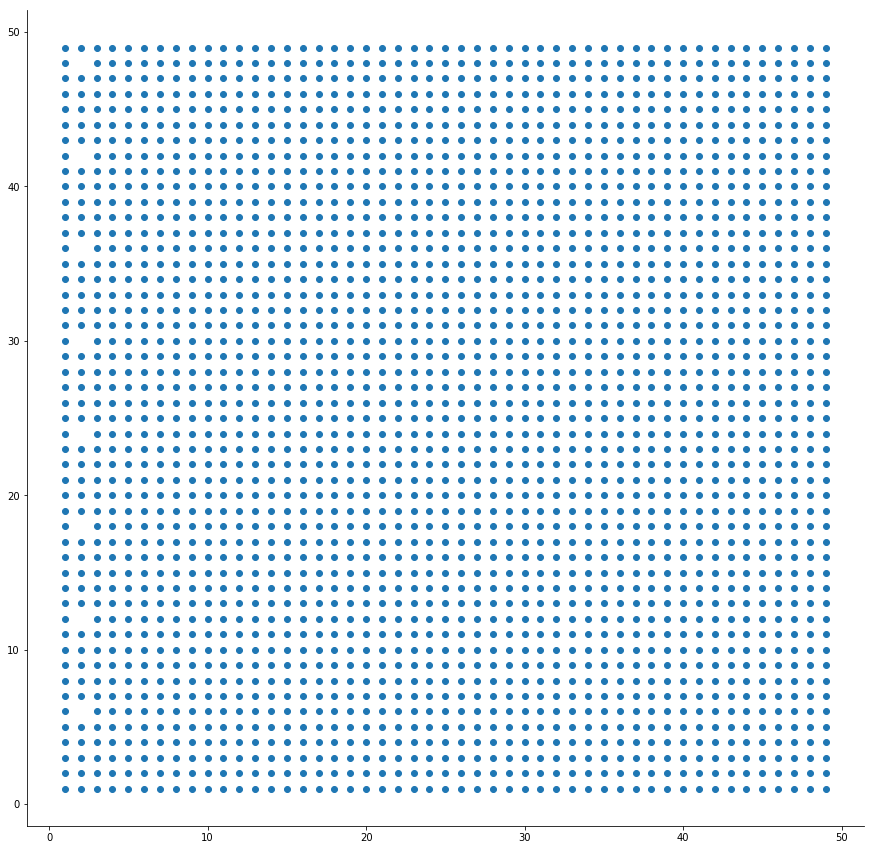}
    \caption{$V(6,0,4,13)\cap R_{50}$}
  \end{subfigure}
  \begin{subfigure}{6cm}
    \centering\includegraphics[width=5cm]{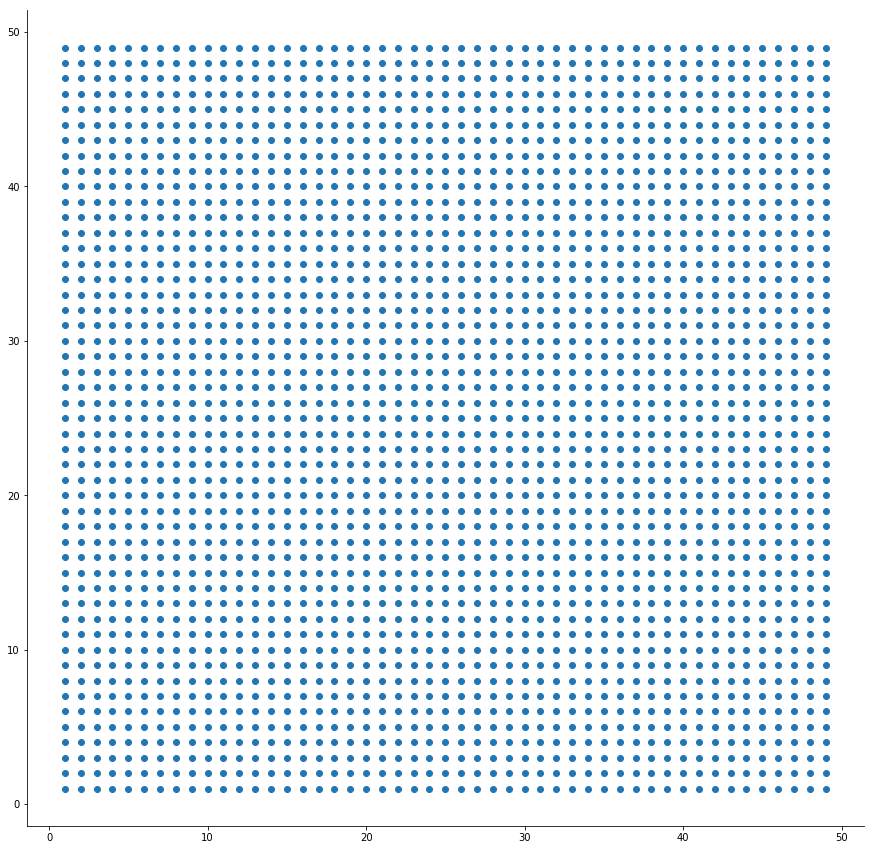}
    \caption{$V(7,12,4,1,11)\cap R_{50}$}
  \end{subfigure}
  \caption{Visible points along four different polynomial families in a $50\times 50$ grid.}
  \label{fig:vispoints}
\end{figure}


\section{Visibility along $\mathcal{F}(1,1)$}\label{sec:f11}
The family $\mathcal{F}(1,1)$ is particularly interesting as it is the first family (if one orders families by degree of polynomials and values of coefficients) with density conjectured to be one, see Conjecture~\ref{conj}. Using Proposition~\ref{lem:bij}, 
\[
\operatorname{dens}(V(1,1))=\lim_{N\to \infty}\frac{|\Q_N(1,1)|}{N^2}=\lim_{N\to \infty}\frac{\#\left\{\frac{b}{a(a+1)} \ \Big | \ 1\le a,b\le N\right\}}{N^2}.
\]
Therefore, it reduces to counting of distinct rationals of the form $b/a(a+1)$. A rational $b/a(a+1)$ will not be counted if
\[
\frac{b}{a(a+1)}=\frac{b'}{a'(a'+1)} \text{ for some } a'<a \text{ and } b'<b,
\]
For $a$ fixed, this implies that if
\[
b=b'\frac{a(a+1)}{a'(a'+1)} \text{ for some } a'<a \text{ and } b'<b,
\]
then $b/a(a+1)$ will be excluded.
This further implies that $b/a(a+1)$ will not be counted if $b$ is a multiple of 
\[
\frac{a(a+1)}{\gcd(a(a+1),a'(a'+1))} \text{ for some } a'<a.
\]
Table~\ref{tab:missingbs} provides values of $b$ whose multiples are missing for a given $a$. 

\begin{table}[]
    \centering
    \begin{tabular}{|c|c||c|c||c|c||c|c|}\hline
        $a$ & $b$ & $a$ & $b$ & $a$ & $b$ & $a$ & $b$\\ \hline
         1 & \text{none} & 6 & 7 & 11 & 6, 11 & 16 & 17\\ \hline
          2 & 3 & 7 & 4 & 12 & 13 & 17 & 9, 17\\ \hline
           3 & 2 & 8 & 6, 9 & 13 & 7, 13 & 18 & 19\\ \hline
            4 & 5 & 9 & 3, 5 & 14 & 5, 7 & 19 & 10, 19\\ \hline
             5 & 3, 5 & 10 & 11 & 15 & 8, 10, 12 & 20 & 2, 7, 15\\ \hline
    \end{tabular}
    \caption{Multiples of $b$ which will be missing for the given value of $1\le a\le 20$.}
    \label{tab:missingbs}
\end{table}

Let $1\le a,b \le N$ for some $N\in \N$. For a fixed $a$, let $b_1<b_2<\dots<b_{k_a}$ be the values of $b$ such that $(a, l b_i)\notin V(1,1)$ for $l\in \N$, $1\le l \le N/b_i$ and $i=1,2,\dots,k_a$. Then, using inclusion-exclusion principle, for a fixed $a$, the number of $b$'s such that $(a,b)\notin V(1,1)$ is equal to
\begin{equation}\label{eq:nan}
n(a, N)=\sum_{i=1}^{k_a} \floor*{\frac{N}{b_i}}-\sum_{j=2}^{k_a}\sum_{i=1}^{j-1} \floor*{\frac{N}{\lcm(b_i, b_j)}}+\cdots +(-1)^{k_a-1} \floor*{\frac{N}{\lcm(b_1, b_2,\dots,b_{k_a})}}.
\end{equation}
The Conjecture~\ref{conj} for $V(1,1)$ is equivalent to
\begin{equation}\label{eq:conjeq}
\sum_{a=1}^N n(a,N) = o(N^2).
\end{equation}
Let $c(a):=n(a,N)/N$ for all $a$ and $N$. Then, \eqref{eq:conjeq} is equivalent to
\begin{equation}\label{eq:conjeq1}
    \sum_{a=1}^N c(a) = o(N).
\end{equation}
Figure~\ref{fig:cvsa} shows plots of $c(a)$ for different ranges of $a$. 
\begin{figure}[ht!]
  \begin{subfigure}{6cm}
    \centering\includegraphics[width=5cm]{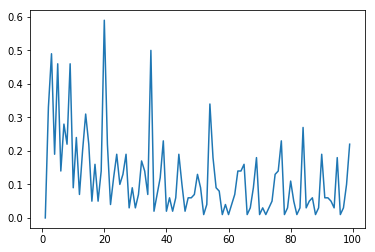}
    \caption{$1\le a \le 100$}
  \end{subfigure}
  \begin{subfigure}{6cm}
    \centering\includegraphics[width=5cm]{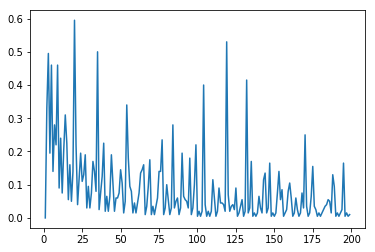}
    \caption{$1\le a \le 200$}
  \end{subfigure}
 
  \begin{subfigure}{6cm}
    \centering\includegraphics[width=5cm]{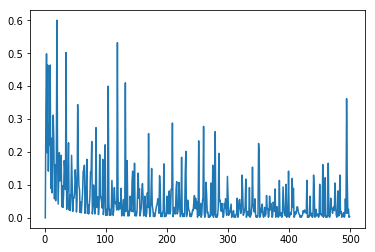}
    \caption{$1\le a \le 500$}
  \end{subfigure}
  \begin{subfigure}{6cm}
    \centering\includegraphics[width=5cm]{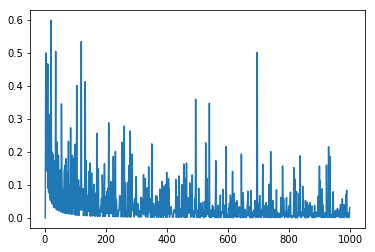}
    \caption{$1\le a \le 1000$}
  \end{subfigure}
  \caption{$c(a)$ vs. $a$}
  \label{fig:cvsa}
\end{figure}
Since the first term in $n(a,N)$ in \eqref{eq:nan} dominates, we have
\begin{align*}
    c(a)\le &\frac{1}{N}\sum_{i=1}^{k_a}\floor*{\frac{N}{b_i}}\le \sum_{i=1}^{k_a}\frac{1}{b_i}=:c_1(a).
\end{align*}
Figure~\ref{fig:sumc1} depicts the  average sum of $c_1(a)$ for initial $500$ values. The figure along with \eqref{eq:conjeq1} suggests the following conjecture. 

\begin{conjecture}\label{conj2}
The $\operatorname{dens(V(1,1))}$ is equal to $1$ if
\begin{equation*}
    \sum_{a=1}^N c_1(a) = o(N).
\end{equation*}
\end{conjecture}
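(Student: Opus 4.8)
The plan is to prove the stated implication directly: assuming $\sum_{a=1}^N c_1(a)=o(N)$, I would deduce $\operatorname{dens}(V(1,1))=1$ by a squeezing argument, exploiting that every quantity in sight is nonnegative. First I would record the exact relation between the visible count and the numbers $n(a,N)$. For each fixed $a$ with $1\le a\le N$, the integer $n(a,N)$ counts precisely those $b\in\{1,\dots,N\}$ for which $(a,b)\notin V(1,1)$, so the number of visible $b$'s is $N-n(a,N)$. Summing over $a$ gives
\[
|V(1,1)\cap R_N|=\sum_{a=1}^N\big(N-n(a,N)\big)=N^2-\sum_{a=1}^N n(a,N).
\]
Dividing by $N^2$ shows, as already noted in this section, that $\operatorname{dens}(V(1,1))=1$ is equivalent to $\sum_{a=1}^N n(a,N)=o(N^2)$, i.e.\ to \eqref{eq:conjeq}, and hence (writing $c(a)=n(a,N)/N$) to \eqref{eq:conjeq1}, namely $\sum_{a=1}^N c(a)=o(N)$.

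The second step is the pointwise comparison $c(a)\le c_1(a)$, valid for every $a$ and every $N$. Here I would invoke the first Bonferroni inequality: the set counted by $n(a,N)$ is the union $\bigcup_{i=1}^{k_a}\{b\le N: b_i\mid b\}$, so truncating the inclusion--exclusion expansion \eqref{eq:nan} after its leading term yields the upper bound $n(a,N)\le\sum_{i=1}^{k_a}\lfloor N/b_i\rfloor$. Using $\lfloor N/b_i\rfloor\le N/b_i$ and dividing by $N$ gives
\[
c(a)=\frac{n(a,N)}{N}\le\frac{1}{N}\sum_{i=1}^{k_a}\Big\lfloor\frac{N}{b_i}\Big\rfloor\le\sum_{i=1}^{k_a}\frac{1}{b_i}=c_1(a),
\]
exactly the inequality observed just before the statement. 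Since the $c_1(a)$ do not depend on $N$, this is a clean majorization uniform in $N$.

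With these two steps the conclusion is immediate. As $c(a)\ge 0$ for all $a$, summing the pointwise bound yields $0\le\sum_{a=1}^N c(a)\le\sum_{a=1}^N c_1(a)=o(N)$, so $\sum_{a=1}^N c(a)=o(N)$, which by the first step is equivalent to $\operatorname{dens}(V(1,1))=1$. This establishes the implication asserted in the statement.

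I do not expect the implication itself to present any obstacle: it is a short squeezing argument assembled entirely from facts already in hand. The genuine difficulty, and the reason this is phrased as a conjecture, lies in verifying the hypothesis $\sum_{a=1}^N c_1(a)=o(N)$. Unwinding $c_1$, this requires controlling, on average over $a\le N$, the sum of reciprocals of the minimal forbidden values $b_i$, each of which has the shape $a(a+1)/\gcd\big(a(a+1),\,a'(a'+1)\big)$ for some $a'<a$. Proving that this average tends to zero would demand an understanding of the distribution of these gcd-quotients---in particular, how frequently $a(a+1)$ shares a large common factor with an earlier $a'(a'+1)$, forcing a small $b_i$---which is precisely the arithmetic input furnished by the generalized-gcd characterization in the $y=mx^k$ case but absent for $\mathcal{F}(1,1)$.
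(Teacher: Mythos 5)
Your argument is correct, and it uses exactly the ingredients the paper itself assembles in Section~\ref{sec:f11}: the identity $|V(1,1)\cap R_N|=N^2-\sum_{a\le N}n(a,N)$ (so that density one is equivalent to \eqref{eq:conjeq} and hence to \eqref{eq:conjeq1}), and the majorization $c(a)\le c_1(a)$ obtained by truncating \eqref{eq:nan} after its first term (the union bound). The paper offers no proof of this statement because it is labelled a conjecture; but as you correctly observe, the conditional implication as literally stated is not conjectural at all --- it follows immediately by nonnegativity and the squeeze $0\le\sum_{a\le N}c(a)\le\sum_{a\le N}c_1(a)$, all of which the authors have already written down. The genuinely open content is the hypothesis $\sum_{a\le N}c_1(a)=o(N)$, i.e.\ the average behaviour of the reciprocals of the quotients $a(a+1)/\gcd(a(a+1),a'(a'+1))$, which is supported only by the numerics in Figure~\ref{fig:sumc1}; your closing paragraph identifies this accurately. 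One small caution: with the paper's definition the list $b_1<\dots<b_{k_a}$ is restricted to $b_i\le N$, so $c_1(a)$ is in principle $N$-dependent (nondecreasing in $N$); this does not affect your inequality, which only needs both sides taken at the same $N$, but you should not assert that $c_1(a)$ is independent of $N$ without fixing a convention.
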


\begin{figure}[ht!]
    \centering
    \includegraphics[scale=0.6]{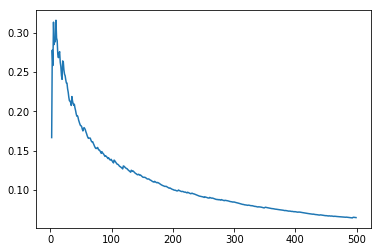}
    \caption{$\frac{1}{N}\sum_{a=1}^N c_1(a)$ vs. $N$}
    \label{fig:sumc1}
\end{figure}

\section*{Acknowledgements}
The authors are indebted to M. Ram Murty for his valuable inputs and guidance during the preparation of this article. SC and AKP are both supported by the Science and Engineering Research Board, Department of Science and Technology, Government of India under grants SB/S2/RJN-053/2018, and SRG/2019/000741, respectively.

\section*{\refname}

\bibliography{references1.bib}

\end{document}